\documentclass{article}
\usepackage{amssymb}

\usepackage{amsfonts}
\usepackage{amsmath}
\usepackage{geometry}

\newtheorem{theorem}{Theorem}[section]

\newtheorem{proposition}[theorem]{Proposition}

\newenvironment{proof}[1][Proof]{\textbf{#1.} }{\ \rule{0.5em}{0.5em}}
\input{tcilatex}
\begin{document}

\title{An extension theory and semidirect sums for nearrings}
\author{Stefan Veldsman \\
Nelson Mandela University (South Africa)\\
and\\
La Trobe University (Australia)}
\maketitle

\begin{abstract}
\noindent \noindent An extension theory for nearrings along the lines of the
Schreier extension for groups and the Everett extension for rings is given.
The semidirect sum of two nearrings is a special case of this theory.
Already there are many examples of purpose built nearring constructions in
the theory of nearrings which fall under this semidirect sum construction.
\end{abstract}

\begin{flushleft}
\textbf{AMS Subject Classification}: 16Y30

\textbf{Keywords: }nearring; semidirect sum; split epimorphism
\end{flushleft}

\section{General}

\noindent Nearrings are right distributive and need not be zero-symmetric.
More information on nearrings can be found in Pilz [11], Meldrum [9], Clay
[6] or Ferrero [8].

\bigskip

\noindent Bellas [3] defined a wreath product of (left) nearrings. For this,
a certain semidirect product of two nearrings was required which was duly
defined. Here we propose a more natural definition of a semidirect sum of
two nearrings which is a special case of a more general extension theory for
nearrings.The connection of this notion to that of a split epimorphism is
also given. We also give many examples of semidirect sums of nearrings. For
a general theory of semidirect products in universal algebra, consult
Facchini and Stanovsk\'{y} [7].

\bigskip

\noindent But we start by giving the more general extension theory for
nearrings along the lines of the Schreier extension for groups and the
Everett extension for rings. More information on these two constructions can
be found in Redei [12] or Petrich [10]. The Schreier extension for groups
will be given in some detail since it is required for the nearring extension.

\bigskip

\section{An extension theory for nearrings.}

\noindent Let $A$ and $B$ be two nearrings. A nearring $N$ is an \textit{%
extension of the nearring }$A$\textit{\ by }$B$ if $A$ is an ideal of $N$
and $N/A\cong B.$ On occasion there may be a need to refer to such an
extension more formally as a tripple $(i,N,p)$ where $i:A\rightarrow N$ is
an injective nearring homomorphism with $A\cong i(A)\lhd N$ and $%
p:N\rightarrow B$ is a surjective nearring homomorphism with $\ker (p)=i(A).$
Two extensions $(i_{1},N_{1},p_{1})$ and $(i_{2},N_{2},p_{2})$ of $A$ by $B$
are \textit{equivalent} if there is a nearring isomorphism $%
f:N_{1}\rightarrow N_{2}$ such that $f\circ i_{1}=i_{2}$ and $p_{2}\circ
f=p_{1}.$

\bigskip

\noindent For sets $X$ and $Y,$ $X^{2}$ denotes the cartesian product $%
X\times X,$ $Map(X,Y):=\{f\mid f:X\rightarrow Y$ a function$\}$ and $%
Map(X,X) $ is written as $Map(X).$ For a group $(G,+),$ $\Phi
:(G,+)\rightarrow (Aut(G,+),\circ )$ is the group homomorphism $\Phi
(a):=\Phi _{a}:G\rightarrow G$ with $\Phi _{a}(g)=a+g-a$ for all $a,g\in G.$
We want to describe an extension $N$ of $A$ by $B$ in terms of $A$ and $B$
and then show that all extensions of $A$ by $B$ are of this form. For this,
we will proceed as follows:

\noindent Step 1: Given two nearrings $A$ and $B,$ we construct a nearring $%
A\boxtimes B$ which is an extension of $A$ by $B.$ This nearring is called
an $E$\textit{-sum of }$A$\textit{\ and }$B$ and the construction will
depend on the existence of a quintuple of functions fulfilling a number of
requirements.

\noindent Step 2: Given two nearrings $A$ and $B$ and an extension $N$ of $A$
by $B,$ it is shown that $N$ is equivalent to an $E$-sum of $A$ and $B.$

\bigskip

\noindent \textbf{Step 1. }Let $A$ and $B$ be two nearrings. Suppose the
existence of five functions

$\bullet $\qquad $\psi :B\rightarrow Aut(A,+),\psi (b):=\psi
_{b}:A\rightarrow A;$

$\bullet $\qquad $\lbrack -,-]:B^{2}\rightarrow A;$

$\bullet $\qquad $\alpha :B\rightarrow Map(A^{2},A),\alpha (b):=\alpha
_{b}:A^{2}\rightarrow A;$

$\bullet $\qquad $\beta :B^{2}\rightarrow Map(A),\beta (b_{1},b_{2}):=\beta
_{b_{1},b_{2}}:A\rightarrow A;$ and

$\bullet $\qquad $\left\langle -,-\right\rangle :B^{2}\rightarrow A.$

\noindent These functions will be subject to some initial requirements to be
specified later. A quintuple of functions as above subject to the given
initial conditions, will be written as $E(\psi ,[-,-],\alpha ,\beta
,\left\langle -,-\right\rangle )$ and be referred to as the \textit{%
associated functions for the }$E$\textit{-sum} of $A$ and $B.$ The two
functions $[-,-]$ and $\left\langle -,-\right\rangle $ are called the 
\textit{factor system for the }$E$\textit{-sum}, the first for addition and
the second for the multiplication.

\noindent Let $A\boxtimes B$ be the cartesian product $A\times B=\{(a,b)\mid
a\in A,b\in B\}$ together with two binary operations on this set defined by:

\qquad $\qquad (a_{1},b_{1})+(a_{2},b_{2})=(a_{1}+\psi
_{b_{1}}(a_{2})+[b_{1},b_{2}],b_{1}+b_{2})$ and

\qquad $\qquad (a_{1},b_{1})(a_{2},b_{2})=(\alpha
_{b_{2}}(a_{1},a_{2})+\beta _{b_{1},b_{2}}(a_{2})+\left\langle
b_{1},b_{2}\right\rangle ,b_{1}b_{2}).$

\noindent By the definition of the associated functions, these two
operations are well-defined. For the construction of the underlying group of
the nearring $A\boxtimes B$ with respect to the above defined addition, we
will need the Schreier group extension theory. Everything below pertaining
to the construction of a group is not new and follows from the theory of the
Schreier group extension; in particular the first two theorems. It is given
in some detail here to facilitate the construction of the nearring on this
group.

\bigskip

\noindent The following two conditions on the functions $\psi $ and $[-,-]$
defined above will be necessary:

$(G_{1})$ $[b_{1},b_{2}]+[b_{1}+b_{2},b_{3}]=\psi
_{b_{1}}([b_{2},b_{3}])+[b_{1},b_{2}+b_{3}]$ for all $b_{1},b_{2},b_{3}\in
B; $ and

$(G_{2})$ $\psi _{b_{1}}\circ \psi _{b_{2}}=\Phi _{\lbrack
b_{1},b_{2}]}\circ \psi _{b_{1}+b_{2}}$ for all $b_{1},b_{2}\in B.$

\noindent A number of special cases of these two conditions is used in the
proof of the next result and is worth singling out.

$(i)$ $\psi _{b}([0,0])=[b,0]$ for all $b\in B$ (put $b_{2}=b_{3}=0$ in $%
(G_{1})).$

$(ii)$ $\psi _{0}([0,b])=[0,0]$ and hence $[0,b]=[0,b^{\prime }]$ for all $%
b,b^{\prime }\in B$ (put $b_{1}=b_{2}=0$ in $(G_{1})$ and use the
injectivity of $\psi _{0}).$

$(iii)$ $[b,-b]+[0,b]=\psi _{b}([-b,b])+[b,0]$ for all $b\in B$ (put $%
b_{1}=b_{3}=b$ and $b_{2}=-b$ in $(G_{1})).$

$(iv)$ $\psi _{0}(a)=[0,b]+a-[0,b]$ for all $b\in B,a\in A$ (put $b_{1}=0$
in $(G_{2})$ and remember that $\psi _{b}$ is a bijection).

\bigskip

\begin{theorem}
Let $(A,+)$ and $(B,+)$ be two groups and let $\psi $ and $[-,-]$ be the
functions $\psi :B\rightarrow Aut(A,+),\psi (b):=\psi _{b}:A\rightarrow A$
and $[-,-]:B^{2}\rightarrow A.$ Then $(A\boxtimes B,+)$ is a group if and
only if the functions $\psi $ and $[-,-]$ satisfy the two conditions $%
(G_{1}) $ and $(G_{2}).$

\noindent The group $(A\boxtimes B,+)$ has additive identity $(-[0,0],0)$
and the additive inverse of $(a,b)$ is $-(a,b)=(-(\psi
_{-b}(a)+[-b,b]+[0,0]),-b).$ The function $\xi :A\rightarrow A\boxtimes B$
defined by $\xi (a):=(a-[0,0],0)$ is an injective group homomorphism with $%
A\cong \xi (A)=(A,0):=\{(a,0)\mid a\in A\}$ which is a normal subgroup of $%
(A\boxtimes B,+)$ and $\eta :A\boxtimes B\rightarrow B$ with $\eta (a,b):=b$
is a surjective group homomorphism with $\ker (\eta )=(A,0).$
\end{theorem}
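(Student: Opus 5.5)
The plan is to prove the two directions separately and then check the listed structural facts directly from the definition of $+$ on $A\boxtimes B$.

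For the ``if'' direction, associativity is checked first. Expanding both bracketings gives
\[
((a_1,b_1)+(a_2,b_2))+(a_3,b_3)=(a_1+\psi_{b_1}(a_2)+[b_1,b_2]+\psi_{b_1+b_2}(a_3)+[b_1+b_2,b_3],\;b_1+b_2+b_3),
\]
\[
(a_1,b_1)+((a_2,b_2)+(a_3,b_3))=(a_1+\psi_{b_1}(a_2)+\psi_{b_1}\psi_{b_2}(a_3)+\psi_{b_1}([b_2,b_3])+[b_1,b_2+b_3],\;b_1+b_2+b_3),
\]
using that $\psi_{b_1}$ is additive. By $(G_2)$,
\[
\psi_{b_1}\psi_{b_2}(a_3)=[b_1,b_2]+\psi_{b_1+b_2}(a_3)-[b_1,b_2].
\]
Hence the second first coordinate equals
\[
a_1+\psi_{b_1}(a_2)+[b_1,b_2]+\psi_{b_1+b_2}(a_3)-[b_1,b_2]+\psi_{b_1}([b_2,b_3])+[b_1,b_2+b_3].
\]
By $(G_1)$, $-[b_1,b_2]+\psi_{b_1}([b_2,b_3])+[b_1,b_2+b_3]=[b_1+b_2,b_3]$, so the two coordinates agree.

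Next I verify that $(-[0,0],0)$ is a two-sided identity. For the left identity,
\[
(-[0,0],0)+(a,b)=(-[0,0]+\psi_0(a)+[0,b],\,b).
\]
Special case (ii) gives $[0,b]=[0,0]$, and (iv) gives $\psi_0(a)=[0,0]+a-[0,0]$, so this collapses to $(a,b)$. For the right identity,
\[
(a,b)+(-[0,0],0)=(a-\psi_b([0,0])+[b,0],\,b)=(a,b)
\]
by (i). For inverses, I check that the stated element is a left inverse:
\[
(-(\psi_{-b}(a)+[-b,b]+[0,0]),-b)+(a,b)=(-[0,0]-[-b,b]-\psi_{-b}(a)+\psi_{-b}(a)+[-b,b],\,0)=(-[0,0],0).
\]
In a monoid where every element has a left inverse, this suffices for a group. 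Alternatively one can verify the right inverse directly using (iii) with $b$ replaced by $-b$.

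For the ``only if'' direction, assume $(A\boxtimes B,+)$ is a group. Associativity then forces the two first coordinates above to agree for all $a_i,b_i$.
\begin{itemize}
\item Setting $a_1=a_2=a_3=0$ gives $[b_1,b_2]+[b_1+b_2,b_3]=\psi_{b_1}([b_2,b_3])+[b_1,b_2+b_3]$, which is $(G_1)$.
\item Setting $a_1=a_2=0$ with $a_3$ arbitrary, and cancelling the $(G_1)$ terms on the right, gives $[b_1,b_2]+\psi_{b_1+b_2}(a_3)=\psi_{b_1}\psi_{b_2}(a_3)+[b_1,b_2]$, which is $(G_2)$.
\end{itemize}
Neither step needs the existence of an identity or of inverses, only associativity.

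For the remaining claims, $\eta$ is clearly a surjective homomorphism. Its kernel is $\{(a,0)\}=(A,0)$, which is therefore normal. To see that $\xi$ is additive, compute
\[
\xi(a)+\xi(a')=(a-[0,0]+\psi_0(a')-\psi_0([0,0])+[0,0],\,0).
\]
By (iv), $\psi_0(a')=[0,0]+a'-[0,0]$ and $\psi_0([0,0])=[0,0]$, so the sum is $(a+a'-[0,0],0)=\xi(a+a')$. Injectivity of $\xi$ and $\xi(A)=(A,0)$ are immediate.

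The main obstacle is the associativity bookkeeping in the noncommutative group $A$: the conjugation produced by $(G_2)$ must be placed exactly so that $(G_1)$ can absorb the leftover terms. Getting the order of the summands right is the only delicate point.
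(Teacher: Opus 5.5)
Your proof is correct and follows essentially the same route as the paper. The converse comes from associativity on elements with zero $A$-components (you keep $(a_3,b_3)$ general where the paper uses $(a,0)$). The identity, the additivity of $\xi$, and the ``if'' direction rely on the special cases (i), (ii), (iv) exactly as in the paper, and your associativity computation spells out what the paper leaves to the reader.

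The one deviation is harmless. You verify only that the stated element is a left inverse and then use the fact that a monoid in which every element has a left inverse is a group. This spares you the paper's direct, and rather messy, right-inverse computation via $(G_2)$ and (iii).
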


\begin{proof}
Suppose $(A\boxtimes B,+)$ is a group. Then

\qquad \qquad $\lbrack
(0,b_{1})+(0,b_{2})]+(0,b_{3})=(0,b_{1})+[(0,b_{2})+(0,b_{3})]$

\noindent which gives $(G_{1}).$ Again using the associativity of the
addition, we know

\qquad \qquad $\lbrack
(0,b_{1})+(0,b_{2})]+(a,0)=(0,b_{1})+[(0,b_{2})+(a,0)].$

\noindent This gives

\qquad \qquad $\lbrack b_{1},b_{2}]+\psi _{b_{1}+b_{2}}(a)+[b_{1}+b_{2},0]$

$\qquad \qquad =\psi _{b_{1}}(\psi _{b_{2}}(a))+\psi
_{b_{1}}([b_{2},0])+[b_{1},b_{2}]$

\noindent which can be written as

$\qquad \qquad \Phi _{\lbrack b_{1},b_{2}]}\circ \psi
_{b_{1}+b_{2}}(a)+[b_{1},b_{2}]+[b_{1}+b_{2},0]$

$\qquad \qquad =\psi _{b_{1}}\circ \psi _{b_{2}}(a)+\psi
_{b_{1}}([b_{2},0])+[b_{1},b_{2}].$

\noindent Using condition $(G_{1})$ with $b_{3}=0,$ we get $(G_{2}).$

\noindent Conversely, suppose the two conditions $(G_{1})$ and $(G_{2})$
hold. \ For the associativity we need

\qquad \qquad $\lbrack b_{1},b_{2}]+\psi
_{b_{1}+b_{2}}(a)+[b_{1}+b_{2},b_{3}]=\psi _{b_{1}}(\psi _{b_{2}}(a))+\psi
_{b_{1}}([b_{2},b_{3}])+[b_{1},b_{2}+b_{3}].$

\noindent Using $(G_{1})$ and $(G_{2})$ will validate the required equality.

\noindent The additive identity is $(-[0,0],0):$ For $(a,b)\in A\boxtimes B,$
$(a,b)+(-[0,0],0)=(a,b)$ using $(i)$ above since $\psi _{b}$ is a group
homomorphism. For $(-[0,0],0)+(a,b)=(a,b),$ we need $-[0,0]+\psi
_{0}(a)+[0,b]=a$ which follows from $(iv)$ and $(ii).$

\noindent Lastly, the additive inverse of $(a,b)\in A\boxtimes B,$ is $%
-(a,b)=(-(\psi _{-b}(a)+[-b,b]+[0,0]),-b).$ Indeed,

\qquad \qquad $(-(\psi _{-b}(a)+[-b,b]+[0,0]),-b)+(a,b)=(-[0,0],0)$

\noindent is clear and for

\qquad \qquad $(a,b)+(-(\psi _{-b}(a)+[-b,b]+[0,0]),-b)=(-[0,0],0),$

\noindent we need $a+\psi _{b}(-[0,0]-[-b,b]-\psi _{-b}(a))+[b,-b]=-[0,0].$

\noindent Now

$\qquad a+\psi _{b}(-[0,0]-[-b,b]-\psi _{-b}(a))+[b,-b]$

\qquad \qquad $=a-(\psi _{b}\circ \psi _{-b}(a)+\psi _{b}([-b,b])+\psi
_{b}([0,0])+[b,-b]$

\qquad \qquad $=a-(\Phi _{\lbrack b,-b]}(\psi
_{0}(a)+([b,-b]+[0,b]-[b,0])+[b,0])+[b,-b]$ by $(G_{2}),(iii)$ and $(i)$

\qquad \qquad $=a-([b,-b]+\psi _{0}(a)+[0,0])+[b,-b]$ by $(ii)$

\qquad \qquad $=a-([b,-b]+[0,0]+a)+[b,-b]$ by $(iii)$ and $(ii)$

\qquad \qquad $=-[0,0]$ as required.

\noindent That $\eta :A\boxtimes B\rightarrow B$ with $\eta (a,b):=b$ is a
surjective group homomorphism with kernel $(A,0)$ is clear; we show that $%
\xi :A\rightarrow A\boxtimes B$ defined by $\xi (a):=(a-[0,0],0)$ is a group
homomorphism:

\qquad $\xi (a_{1})+\xi (a_{2})$

$\qquad \qquad =(a_{1}-[0,0],0)+(a_{2}-[0,0],0)$

\qquad \qquad $=(a_{1}-[0,0]+\psi _{0}(a_{2})-\psi _{0}([0,0])+[0,0],0)$

$\qquad \qquad =(a_{1}+a_{2}-[0,0],0)$ by using $(ii)$ and $(iv)$

$\qquad \qquad =\xi (a_{1}+a_{2}).$
\end{proof}

\bigskip

\noindent The theorem above will actually be used subject to the initial
condition $[b,0]=[0,b]=0$ for all $b\in B.$ This assumption simplifies the
four consequences of $(G_{1})$ and $(G_{2})$ listed above to:

$(i)$ and $(ii):$ $\psi _{b}([0,0])=0$ for all $b\in B;$

$(iii)$ $[b,-b]=\psi _{b}([-b,b])$ for all $b\in B;$ and

$(iv)$ $\psi _{0}(a)=a$ for all $a\in A;$

\noindent and the theorem can be reformulated as:

\bigskip

\begin{theorem}
Let $(A,+)$ and $(B,+)$ be two groups and let $\psi $ and $[-,-]$ be the two
functions $\psi :B\rightarrow Aut(A,+),\psi (b):=\psi _{b}:A\rightarrow A$
and $[-,-]:B^{2}\rightarrow A$ with $[b,0]=[0,b]=0$ for all $b\in B.$Then $%
(A\boxtimes B,+)$ is a group if and only if the functions $\psi $ and $[-,-]$
satisfy the two conditions $(G_{1})$ and $(G_{2}).$

\noindent The group $(A\boxtimes B,+)$ has additive identity $(0,0)$ and
additive inverse $-(a,b)=(-(\psi _{-b}(a)+[-b,b]),-b).$ Moreover, the
function $\xi :A\rightarrow A\boxtimes B$ defined by $\xi (a):=(a,0)$ is an
injective group homomorphism with $A\cong \xi (A)=(A,0):=\{(a,0)\mid a\in
A\} $ which is a normal subgroup of $(A\boxtimes B,+)$ and $\eta :A\boxtimes
B\rightarrow B$ with $\eta (a,b):=b$ is a surjective group homomorphism with 
$\ker (\eta )=(A,0).$
\end{theorem}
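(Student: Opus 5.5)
This is a direct specialisation of the preceding theorem, so the plan is to invoke it rather than redo the Schreier computations. The equivalence ``$(A\boxtimes B,+)$ is a group if and only if $(G_{1})$ and $(G_{2})$ hold'' is exactly the first part of the preceding theorem. The extra hypothesis $[b,0]=[0,b]=0$ only restricts which pairs $(\psi,[-,-])$ are considered. It plays no role in either direction of that argument, so this part carries over verbatim.

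For the remaining assertions I substitute $[0,0]=0$ into the formulas already obtained. The identity $(-[0,0],0)$ becomes $(0,0)$. The inverse $(-(\psi_{-b}(a)+[-b,b]+[0,0]),-b)$ becomes $(-(\psi_{-b}(a)+[-b,b]),-b)$. The embedding $\xi(a)=(a-[0,0],0)$ becomes $\xi(a)=(a,0)$. The preceding theorem already shows that this map is an injective group homomorphism onto the normal subgroup $(A,0)$, and that $\eta(a,b)=b$ is a surjective homomorphism with kernel $(A,0)$.

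As a sanity check I would also verify the simplified consequences $(i)$--$(iv)$ listed before the statement. Setting $b_2=b_3=0$ or $b_1=b_2=0$ in $(G_{1})$ gives $\psi_b([0,0])=0$, which is trivially consistent since $[0,0]=0$. Setting $b_1=0$ in $(G_{2})$ gives $\psi_0\circ\psi_{b_2}=\Phi_{0}\circ\psi_{b_2}=\psi_{b_2}$, and since $\psi_{b_2}$ is surjective this yields $\psi_0=\mathrm{id}$. Specialising $(G_{1})$ to $b_1=b_3=b$, $b_2=-b$ gives $[b,-b]=\psi_b([-b,b])$.

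With these, a direct check is immediate if one prefers not to specialise. The identity law reads $a+\psi_b(0)+[b,0]=a$ and $0+\psi_0(a)+[0,b]=a$. For the inverse, $a+\psi_b(-[-b,b]-\psi_{-b}(a))+[b,-b]=a-\psi_b\psi_{-b}(a)-[b,-b]+[b,-b]$, using $(iii)$ and the fact that $\psi_b$ is a homomorphism. By $(G_{2})$, $\psi_b\psi_{-b}(a)=[b,-b]+a-[b,-b]$, so the whole expression collapses to $0$.

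There is no real obstacle. The only point needing care is the order of summands in the nonabelian group $A$ in this last inverse computation, and that is settled by $(G_{2})$ together with $(iii)$.
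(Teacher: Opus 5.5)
Your main argument is exactly how the paper obtains this statement, and it is complete on its own: you specialise the preceding theorem by setting $[0,0]=0$ in its identity, inverse and embedding formulas, and the equivalence carries over unchanged. Your optional direct inverse check, however, transposes two summands at precisely the point you flagged. Since $\psi_b$ is a homomorphism, $\psi_b(-[-b,b]-\psi_{-b}(a))=-[b,-b]-\psi_b\psi_{-b}(a)$, not $-\psi_b\psi_{-b}(a)-[b,-b]$. With your ordering the expression reduces to $a-([b,-b]+a-[b,-b])=a+[b,-b]-a-[b,-b]$, which need not be $0$, whereas the correct ordering gives $a-[b,-b]-([b,-b]+a-[b,-b])+[b,-b]=0$.
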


\bigskip

\noindent The \textbf{initial conditions} required on the functions in the
quintuple $E(\psi ,[-,-],\alpha ,\beta ,\left\langle -,-\right\rangle )$ are
as follows.

\noindent For all $b,b_{1},b_{2}\in B,a_{1},a_{2}\in A:$

$\bullet \qquad \lbrack b,0]=[0,b]=0;$

$\bullet \qquad \alpha _{b}$ is linear in the first component (i.e., $\alpha
_{b}(a_{1}+a_{2},a)=\alpha _{b}(a_{1},a)+\alpha _{b}(a_{2},a)$);

$\bullet \qquad \alpha _{0}(a_{1},a_{2})=a_{1}a_{2};$

$\bullet \qquad \beta _{0,b}=0$ and $\beta _{b_{1},b_{2}}(0)=0;$ and

$\bullet \qquad \left\langle 0,b\right\rangle =0.$

\noindent We will consider four conditions on the function quintuple:

$(A_{1})$ $\alpha _{b_{3}}(\beta _{b_{1},b_{2}}(a_{2})+\left\langle
b_{1},b_{2}\right\rangle ,a_{3})+\beta
_{b_{1}b_{2},b_{3}}(a_{3})+\left\langle b_{1}b_{2},b_{3}\right\rangle $

$\qquad =\beta _{b_{1},b_{2}b_{3}}(\alpha _{b_{3}}(a_{2},a_{3})+\beta
_{b_{2},b_{3}}(a_{3})+\left\langle b_{2},b_{3}\right\rangle )+\left\langle
b_{1},b_{2}b_{3}\right\rangle ;$

\bigskip

$(A_{2})$ $\alpha _{b_{3}}(\alpha _{b_{2}}(a_{1},a_{2}),a_{3})=\alpha
_{b_{2}b_{3}}(a_{1},\alpha _{b_{3}}(a_{2},a_{3})+\beta
_{b_{2},b_{3}}(a_{3})+\left\langle b_{2},b_{3}\right\rangle );$

\bigskip

$(D_{1})$ $\alpha _{b_{3}}([b_{1},b_{2}],a_{3})+\beta
_{b_{1}+b_{2},b_{3}}(a_{3})+\left\langle b_{1}+b_{2},b_{3}\right\rangle $

$\qquad =\beta _{b_{1},b_{3}}(a_{3})+\left\langle b_{1},b_{3}\right\rangle
+\psi _{b_{1}b_{3}}(\beta _{b_{2},b_{3}}(a_{3}))+\psi
_{b_{1}b_{3}}(\left\langle b_{2},b_{3}\right\rangle
)+[b_{1}b_{3},b_{2}b_{3}];$

\bigskip

$(D_{2})$ $\alpha _{b_{3}}(\psi _{b_{1}}(a_{2}),a_{3})+\beta
_{b_{1},b_{3}}(a_{3})+\left\langle b_{1},b_{3}\right\rangle $

$\qquad =\beta _{b_{1},b_{3}}(a_{3})+\left\langle b_{1},b_{3}\right\rangle
+\psi _{b_{1}b_{3}}(\alpha _{b_{3}}(a_{2},a_{3})).$

\bigskip

\begin{theorem}
Given two nearrings $A$ and $B$ and the quintuple of functions $E(\psi
,[-,-],\alpha ,\beta ,\left\langle -,-\right\rangle )$ subject to the
initial conditions specified above. Then $(A\boxtimes B,+,\cdot )$ is a
nearring if and only if the function quintuple $E(\psi ,[-,-],\alpha ,\beta
,\left\langle -,-\right\rangle )$ satisfy the conditions $%
(G_{1}),(G_{2}),(A_{1}),(A_{2}),(D_{1})$ and $(D_{2}).$

\noindent If $A\boxtimes B$ is a nearring, then the function $\xi
:A\rightarrow A\boxtimes B$ defined by $\xi (a):=(a,0)$ is an injective
nearring homomorphism with $A\cong \xi (A)=(A,0):=\{(a,0)\mid a\in A\}$ an
ideal of $A\boxtimes B$ and $\eta :A\boxtimes B\rightarrow B$ with $\eta
(a,b):=b$ is a surjective nearring homomorphism with $\ker (\eta )=(A,0);$
hence $A\boxtimes B$ is an extension of $A$ by $B.$
\end{theorem}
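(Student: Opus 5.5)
The plan is to split the nearring axioms into three pieces: the additive group structure, associativity of the multiplication, and right distributivity. Each piece is matched with the corresponding pair of conditions. The group part is settled by the reformulated second theorem above. The initial condition $[b,0]=[0,b]=0$ holds, so $(A\boxtimes B,+)$ is a group if and only if $(G_1)$ and $(G_2)$ hold. In that case the identity is $(0,0)$ and $\xi,\eta$ are group homomorphisms with $\ker(\eta)=(A,0)$.

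\textbf{Associativity.} I will expand both sides of $[(a_1,b_1)(a_2,b_2)](a_3,b_3)=(a_1,b_1)[(a_2,b_2)(a_3,b_3)]$. The second coordinates agree by associativity in $B$. The first coordinates give
\[
\alpha_{b_3}\bigl(\alpha_{b_2}(a_1,a_2)+\beta_{b_1,b_2}(a_2)+\langle b_1,b_2\rangle ,a_3\bigr)+\beta_{b_1b_2,b_3}(a_3)+\langle b_1b_2,b_3\rangle
\]
on the left and $\alpha_{b_2b_3}(a_1,x)+\beta_{b_1,b_2b_3}(x)+\langle b_1,b_2b_3\rangle$ on the right, where $x=\alpha_{b_3}(a_2,a_3)+\beta_{b_2,b_3}(a_3)+\langle b_2,b_3\rangle$. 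Linearity of $\alpha_{b_3}$ in its first argument splits the left side into an $\alpha_{b_3}(\alpha_{b_2}(a_1,a_2),a_3)$ term plus the left side of $(A_1)$. Hence $(A_1)$ and $(A_2)$ together imply associativity. For the converse I will specialise. Putting $a_1=0$, and using that linearity gives $\alpha_b(0,a)=0$, should give $(A_1)$ provided that $\alpha_{b_2b_3}(0,x)=0$, which again follows from linearity. After cancelling $(A_1)$, the general equation then yields $(A_2)$.

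\textbf{Right distributivity.} The identity to check is $[(a_1,b_1)+(a_2,b_2)](a_3,b_3)=(a_1,b_1)(a_3,b_3)+(a_2,b_2)(a_3,b_3)$. Its left side has first coordinate $\alpha_{b_3}(a_1+\psi_{b_1}(a_2)+[b_1,b_2],a_3)+\beta_{b_1+b_2,b_3}(a_3)+\langle b_1+b_2,b_3\rangle$. I will expand it by linearity into
\[
\alpha_{b_3}(a_1,a_3)+\alpha_{b_3}(\psi_{b_1}(a_2),a_3)+\alpha_{b_3}([b_1,b_2],a_3)+\cdots .
\]
The right side is $\alpha_{b_3}(a_1,a_3)+\beta_{b_1,b_3}(a_3)+\langle b_1,b_3\rangle+\psi_{b_1b_3}(\alpha_{b_3}(a_2,a_3)+\beta_{b_2,b_3}(a_3)+\langle b_2,b_3\rangle)+[b_1b_3,b_2b_3]$. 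The first term cancels on both sides. I then apply $(D_2)$ to rewrite $\alpha_{b_3}(\psi_{b_1}(a_2),a_3)$ as
\[
\beta_{b_1,b_3}(a_3)+\langle b_1,b_3\rangle+\psi_{b_1b_3}(\alpha_{b_3}(a_2,a_3))-\langle b_1,b_3\rangle-\beta_{b_1,b_3}(a_3).
\]
Substituting this and using $(D_1)$ for the remaining tail should give equality. The main obstacle is this bookkeeping in the noncommutative group $(A,+)$: the order of summands matters, so $(D_2)$ must be applied in its conjugated form exactly as stated, and it must slot in front of the $(D_1)$ expression.

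For the converse I will specialise again. Taking $a_1=a_2=0$ gives $(D_1)$, since $\psi_{b_1}(0)=0$ and $\alpha_{b_3}(0,a_3)=0$. Taking $b_2=0$ and $a_1=0$ uses $[b_1,0]=0$, $\beta_{0,b_3}=0$, $\langle 0,b_3\rangle=0$ and $\psi_{b_1b_3}$ applied to terms with zero $[\,,]$. The term $[b_1b_3,0]=0$ then drops, and comparing the two sides yields $(D_2)$.

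\textbf{Homomorphisms and the ideal.} Finally, $\xi$ is multiplicative. Indeed $(a_1,0)(a_2,0)=(\alpha_0(a_1,a_2)+\beta_{0,0}(a_2)+\langle 0,0\rangle,0)=(a_1a_2,0)$ by the initial conditions. The map $\eta$ is clearly multiplicative, so $(A,0)=\ker\eta$ is an ideal and $A\boxtimes B$ is an extension of $A$ by $B$.
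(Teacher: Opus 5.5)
Your proposal is correct and follows the paper's proof. The group part comes from the reformulated Schreier theorem, necessity of $(A_1),(A_2),(D_1),(D_2)$ comes from specialising the associativity and right-distributivity identities, and sufficiency is the direct expansion using linearity of $\alpha_{b_3}$ in its first argument; you actually carry that expansion out, where the paper only calls it straightforward. The differences are cosmetic. You get $(A_2)$ by cancelling $(A_1)$ from the general identity instead of taking $b_1=0$, and $(D_2)$ by taking $a_1=0$, $b_2=0$ instead of combining the general identity with $(D_1)$; that specialisation tacitly uses $0\cdot b_3=0$ in $B$, which holds by right distributivity.
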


\begin{proof}
Suppose $(A\boxtimes B,+,\cdot )$ is a nearring. Then $(G_{1})$ and $(G_{2})$
hold by the previous result. From the associativity of the multiplication,

$\qquad \qquad \lbrack
(0,b_{1})(a_{2},b_{2})](a_{3},b_{3})=(0,b_{1})[(a_{2},b_{2})(a_{3},b_{3})]$

\noindent holds which gives condition

$\qquad \qquad (A_{1})$ $\alpha _{b_{3}}(\beta
_{b_{1},b_{2}}(a_{2})+\left\langle b_{1},b_{2}\right\rangle ,a_{3})+\beta
_{b_{1}b_{2},b_{3}}(a_{3})+\left\langle b_{1}b_{2},b_{3}\right\rangle $

$\qquad \qquad \qquad =\beta _{b_{1},b_{2}b_{3}}(\alpha
_{b_{3}}(a_{2},a_{3})+\beta _{b_{2},b_{3}}(a_{3})+\left\langle
b_{2},b_{3}\right\rangle )+\left\langle b_{1},b_{2}b_{3}\right\rangle .$

\noindent The equality

\qquad \qquad $\lbrack
(a_{1},0)(a_{2},b_{2})](a_{3},b_{3})=(a_{1},0)[(a_{2},b_{2})(a_{3},b_{3})]$

\noindent and the initial conditions give $(A_{2}):$

\qquad \qquad $\alpha _{b_{3}}(\alpha _{b_{2}}(a_{1},a_{2}),a_{3})=\alpha
_{b_{2}b_{3}}(a_{1},\alpha _{b_{3}}(a_{2},a_{3})+\beta
_{b_{2},b_{3}}(a_{3})+\left\langle b_{2},b_{3}\right\rangle ).$

\noindent The multiplication is distributive from the right over the
addition, hence

\qquad \qquad $\lbrack
(0,b_{1})+(0,b_{2})](a_{3},b_{3})=(0,b_{1})(a_{3},b_{3})+(0,b_{2})(a_{3},b_{3}) 
$

\noindent which gives $(D_{1}):$

\qquad $\alpha _{b_{3}}([b_{1},b_{2}],a_{3})+\beta
_{b_{1}+b_{2},b_{3}}(a_{3})+\left\langle b_{1}+b_{2},b_{3}\right\rangle $

$\qquad \qquad =\beta _{b_{1},b_{3}}(a_{3})+\left\langle
b_{1},b_{3}\right\rangle +\psi _{b_{1}b_{3}}(\beta
_{b_{2},b_{3}}(a_{3}))+\psi _{b_{1}b_{3}}(\left\langle
b_{2},b_{3}\right\rangle )+[b_{1}b_{3},b_{2}b_{3}].$

\noindent Lastly, from the equality

\qquad \qquad $\lbrack
(a_{1},b_{1})+(a_{2},b_{2})](a_{3},b_{3})=(a_{1},b_{1})(a_{3},b_{3})+(a_{2},b_{2})(a_{3},b_{3}), 
$

\noindent use $(D_{1})$ to get $(D_{2}).$

\noindent Conversely, suppose the conditions $%
(G_{1}),(G_{2}),(A_{1}),(A_{2}),(D_{1})$ and $(D_{2})$ are satisfied. By the
previous result, we know $A\boxtimes B$ is a group. It is straightforward to
check that conditions $(A_{1})$ and $(A_{2})$ ensure that the multiplication
is associative and conditions $(D_{1})$ and $(D_{2})$ will give the right
distributivity. Hence $(A\boxtimes B,+,\cdot )$ is a nearring.

\noindent Suppose $(A\boxtimes B,+,\cdot )$ is a nearring. By the previous
result, the function $\xi :A\rightarrow A\boxtimes B$ defined by $\xi
(a):=(a,0)$ is an injective group homomorphism; we show it also preserves
the multiplication. For $a_{1},a_{2}\in A,$ $\xi (a_{1}a_{2})=(a_{1}a_{2},0)$
and $\xi (a_{1})\xi (a_{2})=(a_{1},0)(a_{2},0)=(\alpha
_{0}(a_{1},a_{2})+\beta _{0,0}(a_{2})+\left\langle 0,0\right\rangle
,0)=(a_{1}a_{2},0).$ Lastly, $\eta :A\boxtimes B\rightarrow B$ with $\eta
(a,b):=b$ is a surjective nearring homomorphism with $\ker (\eta )=(A,0).$
\end{proof}

\bigskip

\noindent \textbf{Step 2}. Given two nearrings $A$ and $B$ and an extension $%
N$ of $A$ by $B,$ which we write formally as $(i,N,p)$ where $i:A\rightarrow
N$ is $i(a)=a$ and $p:N\rightarrow B$ is a surjective nearring homomorphism
with $\ker (p)=A.$ We will show that this extension is equivalent to an $E$%
-sum $A\boxtimes B$ for some suitable choice of associated functions $E(\psi
,[-,-],\alpha ,\beta ,\left\langle -,-\right\rangle )$.

\bigskip

\noindent The nearring homomorphism $p:N\rightarrow B$ is surjective. Let $%
\sigma :B\rightarrow N$ be a choice function such that $p\circ \sigma =1_{B}$
subject to $\sigma (0)=0.$ Hence $\sigma (b)\in p^{-1}(b),$ and in
particular for $x\in N$ with $b:=p(x)\in B,$ we have $\sigma (b)=\sigma
(p(x))=x+a_{x}$ for some $a_{x}\in A.$ Now $x\in N$ has a unique
representation $x=x-\sigma (p(x))+\sigma (p(x))$ and $x-\sigma (p(x))\in A.$
Thus $f:N\rightarrow A\times B$ with $f(x):=(x-\sigma (b),b)$ where $b:=p(x)$
is a well-defined function. It is a bijection with $f^{-1}(a,b):=a+\sigma
(b).$ This means $f$ induces a nearring structure on $A\times B$ which is
described in what follows. Let $x_{1},x_{2}\in N,$ say $b_{j}=p(x_{j})$ and $%
a_{j}=x_{j}-\sigma (p(x_{j}))\in A$ for $j=1,2.$

\noindent Then $x_{1}+x_{2}=a_{1}+\sigma (b_{1})+a_{2}+\sigma (b_{2})$

$\qquad \qquad =a_{1}+(\sigma (b_{1})+a_{2}-\sigma (b_{1}))+(\sigma
(b_{1})+\sigma (b_{2}))$

$\qquad \qquad =a_{1}+\psi _{b_{1}}(a_{2})+[b_{1},b_{2}]+\sigma
(b_{1}+b_{2}) $ where

\noindent $\psi :B\rightarrow Aut(A,+),\psi (b):=\psi _{b}:A\rightarrow A$
is defined by $\psi _{b}(a)=\sigma (b)+a-\sigma (b)$ and

\noindent $\lbrack -,-]:B^{2}\rightarrow A$ is defined by $%
[b_{1},b_{2}]=\sigma (b_{1})+\sigma (b_{2})-\sigma (b_{1}+b_{2})$ for all $%
b,b_{1},b_{2}\in B,a\in A.$ We need to check that these functions are
well-defined. Because $A\lhd N,$ $\psi _{b}\in Aut(A,+).$ From $%
b_{1}+b_{2}=p(\sigma (b_{1}))+p(\sigma (b_{2}))=p(\sigma (b_{1})+\sigma
(b_{2}))=p(-a_{1}+x_{1}-a_{2}+x_{2})=p(x_{1}+x_{2}),$ we know $\sigma
(b_{1}+b_{2})=x_{1}+x_{2}+a_{3}$ for some $a_{3}\in A.$ Thus

$[b_{1},b_{2}]=\sigma (b_{1})+\sigma (b_{2})-\sigma (b_{1}+b_{2})$

$\qquad =-a_{1}+x_{1}-a_{2}+x_{2}-(x_{1}+x_{2}+a_{3})$

$\qquad =-a_{1}+(x_{1}-a_{2}-x_{1})+(x_{1}+x_{2})-a_{3}-(x_{1}+x_{2})\in A.$

\noindent Now $f(x_{1})+f(x_{2})=(x_{1}-\sigma (b_{1}),b_{1})+(x_{2}-\sigma
(b_{2}),b_{2})=(a_{1},b_{1})+(a_{2},b_{2})$ and

\qquad $f(x_{1}+x_{2})=(x_{1}+x_{2}-\sigma (p(x_{1}+x_{2})),p(x_{1}+x_{2}))$

$\qquad \qquad =(x_{1}+x_{2}-\sigma (b_{1}+b_{2})),b_{1}+b_{2})$

$\qquad \qquad =(a_{1}+\psi _{b_{1}}(a_{2})+[b_{1},b_{2}],b_{1}+b_{2}).$

\noindent Since we want $f(x_{1})+f(x_{2})=f(x_{1}+x_{2}),$ define addition
in $A\times B$ by

\begin{center}
$(a_{1},b_{1})+(a_{2},b_{2})=(a_{1}+\psi
_{b_{1}}(a_{2})+[b_{1},b_{2}],b_{1}+b_{2})$
\end{center}

\noindent for all $a_{1},a_{2}\in A$ and $b_{1},b_{2}\in B.$

\noindent For the multiplication,

$x_{1}x_{2}=(a_{1}+\sigma (b_{1}))(a_{2}+\sigma (b_{2}))$

$\qquad =a_{1}(a_{2}+\sigma (b_{2}))+\sigma (b_{1})(a_{2}+\sigma (b_{2}))$

$\qquad =[a_{1}(a_{2}+\sigma (b_{2}))]+[\sigma (b_{1})(a_{2}+\sigma
(b_{2}))-\sigma (b_{1})\sigma (b_{2})]+\sigma (b_{1})\sigma (b_{2})$

$\qquad =\alpha _{b_{2}}(a_{1},a_{2})+\beta
_{b_{1},b_{2}}(a_{2})+\left\langle b_{1},b_{2}\right\rangle +\sigma
(b_{1}b_{2})$ where

\noindent $\alpha :B\rightarrow Map(A^{2},A),\alpha (b):=\alpha
_{b}:A^{2}\rightarrow A$ is defined by $\alpha
_{b}(a_{1},a_{2})=a_{1}(a_{2}+\sigma (b));$

\noindent $\beta :B^{2}\rightarrow Map(A),\beta (b_{1},b_{2}):=\beta
_{b_{1},b_{2}}:A\rightarrow A$ is defined by $\beta _{b_{1},b_{2}}(a)=\sigma
(b_{1})(a+\sigma (b_{2}))-\sigma (b_{1})\sigma (b_{2});$ and

\noindent $\left\langle -,-\right\rangle :B^{2}\rightarrow A$ is defined by $%
\left\langle b_{1},b_{2}\right\rangle =\sigma (b_{1})\sigma (b_{2})-\sigma
(b_{1}b_{2})$

\noindent for all $a,a_{1},a_{2}\in A$ and $b,b_{1},b_{2}\in B.$ Since $%
A\lhd N,$ both the functions $\alpha $ and $\beta $ are well-defined. For
the function $\left\langle -,-\right\rangle ,$ we have

$b_{1}b_{2}=p(\sigma (b_{1}))p(\sigma (b_{2}))$

$\qquad \qquad =p(\sigma (b_{1})\sigma (b_{2}))$

$\qquad \qquad =p((-a_{1}+x_{1})(-a_{2}+x_{2}))$

$\qquad \qquad =p(-a_{1}(-a_{2}+x_{2})+x_{1}(-a_{2}+x_{2}))$

$\qquad \qquad
=p(-a_{1}(-a_{2}+x_{2})+x_{1}(-a_{2}+x_{2})-x_{1}x_{2}+x_{1}x_{2})$

$\qquad \qquad =p(x_{1}x_{2})$ since $A\lhd N.$

\noindent Thus $\sigma (b_{1}b_{2})=x_{1}x_{2}+a_{4}$ for some $a_{4}\in A.$
Then

$\left\langle b_{1},b_{2}\right\rangle =\sigma (b_{1})\sigma (b_{2})-\sigma
(b_{1}b_{2})$

$\qquad \qquad =(-a_{1}+x_{1})(-a_{2}+x_{2})-(x_{1}x_{2}+a_{4})$

$\qquad \qquad
=[-a_{1}(-a_{2}+x_{2})]+[x_{1}(-a_{2}+x_{2})-x_{1}x_{2}]+[x_{1}x_{2}-a_{4}-x_{1}x_{2}]\in A. 
$

\noindent Now $f(x_{1})f(x_{2})=(x_{1}-\sigma (b_{1}),b_{1})(x_{2}-\sigma
(b_{2}),b_{2})=(a_{1},b_{1})(a_{2},b_{2})$ and

\qquad $f(x_{1}x_{2})=(x_{1}x_{2}-\sigma (p(x_{1}x_{2})),p(x_{1}x_{2}))$

$\qquad \qquad \qquad =(x_{1}x_{2}-\sigma (b_{1}b_{2}),b_{1}b_{2})$

$\qquad \qquad \qquad =(\alpha _{b_{2}}(a_{1},a_{2})+\beta
_{b_{1},b_{2}}(a_{2})+\left\langle b_{1},b_{2}\right\rangle ,b_{1}b_{2}).$

\noindent Since we want $f(x_{1})f(x_{2})=f(x_{1}x_{2}),$ define
multiplication by

\begin{center}
$(a_{1},b_{1})(a_{2},b_{2})=(\alpha _{b_{2}}(a_{1},a_{2})+\beta
_{b_{1},b_{2}}(a_{2})+\left\langle b_{1},b_{2}\right\rangle ,b_{1}b_{2}).$
\end{center}

\noindent for all $a_{1},a_{2}\in A$ and $b_{1},b_{2}\in B.$

\noindent It can easily be checked that the five functions defined above
fulfill the initial requirements as specified in Step 1. Moreover, the
bijection $f$ with the addition and multiplication as defined above, gives a
nearring $(A\times B,+,\cdot )$ which is just an $E$-sum $A\boxtimes B$ of $%
A $ and $B$ with associated functions $E(\psi ,[-,-],\alpha ,\beta
,\left\langle -,-\right\rangle ).$ By the theorems above, these functions
satisfy the conditions $(G_{1}),(G_{2}),(A_{1}),(A_{2}),(D_{1})$ and $%
(D_{2}).$ We thus have:

\bigskip

\begin{theorem}
For two nearrings $A$ and $B,$ let $N$ be an extension of $A$ by $B,$
written formally as $(i,N,p)$ where $i:A\rightarrow N$ is $i(a)=a$ and $%
p:N\rightarrow B$ is a surjective nearring homomorphism with $\ker (p)=A.$
Then the extension $(i,N,p)$ is equivalent to an $E$-sum $(\xi ,A\boxtimes
B,\eta )$ for some suitable choice of associated functions $E(\psi
,[-,-],\alpha ,\beta ,\left\langle -,-\right\rangle )$ fulfilling the
initial conditions as well as the conditions $%
(G_{1}),(G_{2}),(A_{1}),(A_{2}),(D_{1})$ and $(D_{2}).$
\end{theorem}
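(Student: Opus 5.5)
The proof will mostly assemble the construction already carried out in Step 2 and then invoke the earlier theorems. Fix a section $\sigma :B\rightarrow N$ with $p\circ \sigma =1_{B}$ and $\sigma (0)=0$. Define $\psi ,[-,-],\alpha ,\beta ,\left\langle -,-\right\rangle $ by the formulas displayed in Step 2. Let $f:N\rightarrow A\times B$ be $f(x)=(x-\sigma (p(x)),p(x))$, with inverse $f^{-1}(a,b)=a+\sigma (b)$. The computations in Step 2 show that $f(x_{1}+x_{2})=f(x_{1})+f(x_{2})$ and $f(x_{1}x_{2})=f(x_{1})f(x_{2})$, where the operations on $A\times B$ are exactly the $E$-sum operations. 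Since $f$ is a bijection and $N$ is a nearring, the transported structure $(A\times B,+,\cdot )$ is a nearring, namely $A\boxtimes B$. It is also isomorphic to $N$ via $f$.

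Next I will verify the initial conditions directly from $\sigma (0)=0$.
\begin{itemize}
\item $[b,0]=\sigma (b)+0-\sigma (b)=0$ and $[0,b]=0$.
\item $\alpha _{b}(a_{1}+a_{2},a)=(a_{1}+a_{2})(a+\sigma (b))$ is additive in the first argument by right distributivity.
\item $\alpha _{0}(a_{1},a_{2})=a_{1}a_{2}$.
\item $\beta _{0,b}(a)=0(a+\sigma (b))-0\sigma (b)=0$.
\item $\beta _{b_{1},b_{2}}(0)=\sigma (b_{1})\sigma (b_{2})-\sigma (b_{1})\sigma (b_{2})=0$.
\item $\left\langle 0,b\right\rangle =0\sigma (b)-\sigma (0)=0$.
\end{itemize}
In the third and fifth items, $0x=0$ holds in a right distributive nearring, so non-zero-symmetry causes no problem there. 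Well-definedness (values landing in $A$, and $\psi _{b}\in Aut(A,+)$) was checked in Step 2 using $A\lhd N$. For $\beta $ this uses the ideal property $n(a+m)-nm\in A$, which is precisely the left-ideal condition for nearring ideals.

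Having a nearring $A\boxtimes B$ whose quintuple satisfies the initial conditions, the preceding theorem (the \textquotedblleft only if\textquotedblright\ direction) gives $(G_{1}),(G_{2}),(A_{1}),(A_{2}),(D_{1}),(D_{2})$. The same theorem shows that $(\xi ,A\boxtimes B,\eta )$ is an extension of $A$ by $B$.

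Finally, $f$ gives the equivalence. It is a nearring isomorphism $N\rightarrow A\boxtimes B$. For $a\in A$ we have $p(a)=0$, so $f(i(a))=(a-\sigma (0),0)=(a,0)=\xi (a)$, that is, $f\circ i=\xi $. Also $\eta (f(x))=p(x)$, so $\eta \circ f=p$. The only point needing care is the bookkeeping that the transported operations really coincide with the $E$-sum formulas, which is the (already carried out) computation in Step 2. I expect no genuine obstacle beyond confirming that each function takes values in $A$, which rests on $A$ being an ideal.
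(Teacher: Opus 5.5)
Your proposal is correct and follows the paper's own route. You fix the section $\sigma$ with $\sigma(0)=0$, transport the nearring structure of $N$ to $A\times B$ via $f(x)=(x-\sigma(p(x)),p(x))$, note that the transported operations are exactly the $E$-sum operations, invoke the ``only if'' direction of the preceding theorem to get $(G_{1}),(G_{2}),(A_{1}),(A_{2}),(D_{1}),(D_{2})$, and check $f\circ i=\xi$ and $\eta\circ f=p$. Your explicit verification of the initial conditions fills in what the paper calls ``easily checked''; the only slip is a labelling one, since $0x=0$ is used in your fourth and sixth items ($\beta_{0,b}$ and $\left\langle 0,b\right\rangle$), not the third and fifth.
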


\begin{proof}
In view of the preceeding, we only have to show the equivalence of the two
extensions $(i,N,p)$ and $(\xi ,A\boxtimes B,\eta ).$ We have $%
f:N\rightarrow A\boxtimes B,$ $f(x)=(x-\sigma (p(x)),p(x))$ is a nearring
isomorphism, $i(a)=a,$ $\xi (a)=(a,0)$ and $\eta (a,b)=b$ which give $f\circ
i=\xi $ and $\eta \circ f=p.$
\end{proof}

\bigskip

\noindent Two special cases of an $E$-sum $A\boxtimes B$ with associated
functions $E(\psi ,[-,-],\alpha ,\beta ,\left\langle -,-\right\rangle )$
fulfilling the initial conditions are:

(1) When the factor systems $[-,-]$ and $\left\langle -,-\right\rangle $ are
both $0$ and $\psi :(B,+)\rightarrow (Aut(A,+),\circ )$ is a group
homomorphism, then the conditions $(G_{1})$ and $(G_{2})$ are trivially
fulfilled and $A\boxtimes B$ is the (outer) semidirect sum of $A$ and $B$ if
and only if the conditions $(A_{1}),(A_{2}),(D_{1})$ and $(D_{2})$ hold.
This case will be discussed in more detail in the next section.

(2) When the factor systems $[-,-]$ and $\left\langle -,-\right\rangle $ are
both $0,$ then $A\boxtimes B$ is a direct sum of $A$ and $B$ if and only if
for all $b,b_{1},b_{2}\in B,\psi _{b}=1_{A},\alpha _{b}=\alpha _{0}$ and $%
\beta (b_{1},b_{2})=0.$

\bigskip

\section{The semidirect sum of two nearrings.}

\noindent A nearring $N$ is the (\textit{internal}) \textit{semidirect sum}
of two nearrings $A$ and $B,$ written as $A\rtimes B,$ if $A$ is an ideal of 
$N,$ $B$ is a subnearring of $N,$ $N=A+B,$ $A\cap B=0$ and $N/A\cong B.$ In
particular, this means the group $(N,+)$ is a semidirect sum of the groups $%
(A,+)$ and $(B,+).$ We should emphasize the obvious: the order in which the
nearrings $A$ and $B$ are mentioned is important. Recall, a nearring
homomorphism $f:N\rightarrow B$ is a \textit{split epimorphism }(=\textit{\
retraction})\textit{\ }if there is a nearring homomorphism $g:B\rightarrow N$
such that $f\circ g=1_{B}.$

\bigskip

\begin{proposition}
If $f:N\rightarrow B$ is a split epimorphism\textit{\ }with $\ker (f)=A,$
then $N$ is a semidirect sum of $A$ and a subnearring of $N$ which is an
isomorphic copy of $B$.
\end{proposition}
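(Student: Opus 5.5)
Let $g:B\rightarrow N$ be a nearring homomorphism with $f\circ g=1_{B}$, and put $B^{\prime }:=g(B)$. The plan is to verify directly the five requirements in the definition of an internal semidirect sum for the pair $A$ and $B^{\prime }$: $A\lhd N$, $B^{\prime }$ is a subnearring of $N$, $N=A+B^{\prime }$, $A\cap B^{\prime }=0$ and $N/A\cong B^{\prime }$. We also need $B^{\prime }\cong B$, which is part of the claim.

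First, $A=\ker (f)$ is an ideal of $N$ because it is the kernel of a nearring homomorphism. Next, $B^{\prime }$ is a subnearring of $N$ since it is the image of the homomorphism $g$. Moreover $g$ is injective, because $g(b)=g(b^{\prime })$ gives $b=f(g(b))=f(g(b^{\prime }))=b^{\prime }$. Hence $g:B\rightarrow B^{\prime }$ is a nearring isomorphism, and $B^{\prime }$ is an isomorphic copy of $B$.

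For $N=A+B^{\prime }$, take $x\in N$ and write $x=(x-g(f(x)))+g(f(x))$. Applying $f$ and using $f\circ g=1_{B}$ gives $f(x-g(f(x)))=f(x)-f(x)=0$, so $x-g(f(x))\in A$, while $g(f(x))\in B^{\prime }$. This decomposition has the order $A+B^{\prime }$ used in the definition. Since $A$ is normal in $(N,+)$, $A+B^{\prime }=B^{\prime }+A$ anyway, so the order does not matter. For $A\cap B^{\prime }=0$, suppose $g(b)\in A$. Then $0=f(g(b))=b$, so $g(b)=g(0)=0$.

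Finally, $f$ is surjective because $f\circ g=1_{B}$. The first isomorphism theorem for nearrings then gives $N/A\cong B\cong B^{\prime }$; equivalently, $f$ composed with $g$ is a surjection onto $B^{\prime }$ with kernel $A$. All steps are routine. The only point needing care is that the definition of semidirect sum is read with $B$ replaced by the image $B^{\prime }=g(B)$ inside $N$, which is why the statement speaks of an isomorphic copy of $B$.
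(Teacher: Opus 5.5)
Your proof is correct and follows the paper's argument essentially step for step: you take $g$ with $f\circ g=1_B$, show $g$ is injective, get $A\cap g(B)=0$ from $f\circ g=1_B$, and use the decomposition $n=(n-g(f(n)))+g(f(n))$ to obtain $N=A+g(B)$. The only slip is one of wording: the surjection $N\rightarrow g(B)$ with kernel $A$ is $g\circ f$, not ``$f$ composed with $g$'', since $f\circ g=1_B$.
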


\begin{proof}
Let $g:B\rightarrow N$ be the right inverse of $f,$ i.e., $f\circ g=1_{B}.$
Then $g$ must necessarily be injective and hence $B$ is isomorphic to the
subnearring $g(B)$ of $N.$ Moreover, $A$ is an ideal of $N$ and since $f$ is
surjective, $N/A\cong B.$ We also have $A\cap g(B)=0$ by the definition of $%
A $ and $f\circ g=1_{B}.$ For any $n\in N,$ $g(f(n))\in g(B)$ and $%
n=(n-g(f(n)))+g(f(n))$ with $n-g(f(n))\in A.$ Thus $N=A+g(B).$
\end{proof}

\bigskip

\noindent In the category of nearrings, a split epimorphism is a projection
morphism of a semidirect sum. The next result gives the requirements to
construct a semidirect sum from two nearrings $A$ and $B.$ The proof follows
from Theorem 2.3.

\bigskip

\begin{proposition}
Let $A$ and $B$ be two nearrings with associated functions $E(\psi
,[-,-],\alpha ,\beta ,\left\langle -,-\right\rangle )$ fulfilling the
initial conditions specified in the previous section subject to the stronger
requirements $[-,-]=\left\langle -,-\right\rangle =0.$ The $E$-sum $%
A\boxtimes B$ is a semidirect sum of $A$ and $B$ if and only if the
conditions $(G_{2}),(A_{1}),(A_{2}),(D_{1})$ and $(D_{2})$ hold.
\end{proposition}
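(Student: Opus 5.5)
The plan is to reduce everything to Theorem 2.3, using what the hypothesis $[-,-]=\left\langle -,-\right\rangle =0$ does to the conditions there. First, with $[-,-]=0$, condition $(G_{1})$ holds trivially, since both sides are $0$. Also $\Phi _{0}=1_{A}$, so $(G_{2})$ becomes $\psi _{b_{1}}\circ \psi _{b_{2}}=\psi _{b_{1}+b_{2}}$; that is, $\psi $ is a group homomorphism $(B,+)\rightarrow (Aut(A,+),\circ )$. The initial conditions $[b,0]=[0,b]=0$ and $\left\langle 0,b\right\rangle =0$ hold automatically. So Theorem 2.3 says: $A\boxtimes B$ is a nearring if and only if $(G_{2}),(A_{1}),(A_{2}),(D_{1}),(D_{2})$ hold. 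Moreover, when it is a nearring, $\xi (A)=(A,0)$ is an ideal isomorphic to $A$ and $\eta $ is surjective with kernel $(A,0)$, so $A\boxtimes B/(A,0)\cong B$.

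For the direction ``conditions $\Rightarrow $ semidirect sum'', I will take the nearring given by Theorem 2.3 and exhibit the copy of $B$ as $(0,B):=\{(0,b)\mid b\in B\}$ via $\zeta (b):=(0,b)$. I then check that $\zeta $ is a nearring homomorphism. For addition,
\[
(0,b_{1})+(0,b_{2})=(\psi _{b_{1}}(0)+[b_{1},b_{2}],b_{1}+b_{2})=(0,b_{1}+b_{2}).
\]
For multiplication,
\[
(0,b_{1})(0,b_{2})=(\alpha _{b_{2}}(0,0)+\beta _{b_{1},b_{2}}(0),b_{1}b_{2}).
\]
Here $\beta _{b_{1},b_{2}}(0)=0$ is an initial condition, and $\alpha _{b_{2}}(0,0)=0$ follows from linearity of $\alpha _{b}$ in the first component. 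So $(0,B)$ is a subnearring isomorphic to $B$. Next, $(A,0)\cap (0,B)=\{(0,0)\}$, and $(a,0)+(0,b)=(a+\psi _{0}(0),b)=(a,b)$, so $A\boxtimes B=(A,0)+(0,B)$. Equivalently, $\eta \circ \zeta =1_{B}$, so $\eta $ is a split epimorphism and Proposition 3.1 gives the conclusion directly.

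For the converse, I will read ``$A\boxtimes B$ is a semidirect sum of $A$ and $B$'' as including that $A\boxtimes B$ is a nearring with the given operations. The ``only if'' direction of Theorem 2.3 then yields $(G_{1}),(G_{2}),(A_{1}),(A_{2}),(D_{1}),(D_{2})$, in particular the five conditions listed. The only mildly delicate point is the bookkeeping showing $\alpha _{b}(0,a)=0$ and the observation that $(G_{1})$ is vacuous here, which is why it is dropped from the list. I do not expect any real obstacle; the whole statement is a specialization of Theorem 2.3 combined with Proposition 3.1.
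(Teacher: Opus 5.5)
Your proposal is correct and takes essentially the paper's approach: the paper's proof is just a specialization of Theorem 2.3, in which $(G_1)$ holds trivially and $(G_2)$ reduces to $\psi$ being a homomorphism because $\Phi_0=1_A$. Your explicit check that $b\mapsto(0,b)$ is a nearring embedding splitting $\eta$ (using $\alpha_b(0,a)=0$ from first-component linearity), together with the decomposition $(a,b)=(a,0)+(0,b)$, fills in the semidirect-sum details that the paper leaves implicit.
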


\bigskip

\noindent This semidirect sum of $A$ and $B$ in the proposition above is
actually called the \textit{outer semidirect sum} of $A$ and $B$ since $%
A\cong (A,0)$ which is an ideal of $A\boxtimes B.$ If necessary, this
semidirect sum will be denoted by $A\rtimes B(\psi ,\alpha ,\beta ).$ Note
that for $(a,b)\in A\rtimes B,$ $(a,b)=(a,0)+(0,b)=(0,b)+(a^{\prime },0)$
where $a^{\prime }=\psi _{-b}(a).$ The more restricted conditions required
in the previous result simplify the conditions $%
(G_{2}),(A_{1}),(A_{2}),(D_{1})$ and $(D_{2})$ to:

\noindent $(G_{2})$ $\psi _{b_{1}}\circ \psi _{b_{2}}=\psi _{b_{1}+b_{2}}$
for all $b_{1},b_{2}\in B,$ i.e., $\psi :(B,+)\rightarrow (Aut(A,+),\circ )$
is a group homomorphism;

\noindent $(A_{1})$ For all $a_{2},a_{3}\in A,b_{1},b_{2},b_{3}\in B,$

$\qquad \alpha _{b_{3}}(\beta _{b_{1},b_{2}}(a_{2}),a_{3})+\beta
_{b_{1}b_{2},b_{3}}(a_{3})=\beta _{b_{1},b_{2}b_{3}}(\alpha
_{b_{3}}(a_{2},a_{3})+\beta _{b_{2},b_{3}}(a_{3}));$

\noindent $(A_{2})$ For all $a_{1},a_{2},a_{3}\in A,b_{2},b_{3}\in B,$

\qquad $\alpha _{b_{3}}(\alpha _{b_{2}}(a_{1},a_{2}),a_{3})=\alpha
_{b_{2}b_{3}}(a_{1},\alpha _{b_{3}}(a_{2},a_{3})+\beta
_{b_{2},b_{3}}(a_{3}));$

\noindent $(D_{1})$ For all $b_{1},b_{2},b_{3}\in B,$

\qquad $\beta _{b_{1}+b_{2},b_{3}}=\beta _{b_{1},b_{3}}+\psi
_{b_{1}b_{3}}\circ \beta _{b_{2},b_{3}};$ and

\noindent $(D_{2})$ For all $a_{2},a_{3}\in A,b_{1},b_{3}\in B,$

\qquad $\alpha _{b_{3}}(\psi _{b_{1}}(a_{2}),a_{3})+\beta
_{b_{1},b_{3}}(a_{3})=\beta _{b_{1},b_{3}}(a_{3})+\psi _{b_{1}b_{3}}(\alpha
_{b_{3}}(a_{2},a_{3})).$

\bigskip

\noindent By Theorem 2.4 we know that any semidirect sum of two nearrings $A$
and $B$ is an $E$-sum $A\boxtimes B$ with associated functions $E(\psi
,[-,-],\alpha ,\beta ,\left\langle -,-\right\rangle ).$ This section is
concluded with the canonical way of doing so.

\bigskip

\begin{proposition}
Suppose the nearring $N$ is a \textit{semidirect sum} of the two nearrings $%
A $ and $B$. Then $N$ is equivalent to the $E$-sum $A\boxtimes B$ with
associated functions $E(\psi ,[-,-],\alpha ,\beta ,\left\langle
-,-\right\rangle )$ given by

$[b_{1},b_{2}]=\left\langle b_{1},b_{2}\right\rangle =0$ for all $%
b_{1},b_{2}\in B;$

$\psi _{b}(a)=b+a-b$ for all $b\in B,a\in A;$

$\alpha _{b}(a_{1},a_{2})=a_{1}(a_{2}+b)$ for all $b\in B,a_{1},a_{2}\in A;$
and

$\beta _{b_{1},b_{2}}(a)=b_{1}(a+b_{2})-b_{1}b_{2}$ for all $b_{1},b_{2}\in
B,a\in A.$
\end{proposition}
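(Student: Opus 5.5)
The plan is to specialise the construction of Step 2 and Theorem 2.4 by choosing the section $\sigma$ to be the inclusion of $B$ into $N$. Since $N$ is a semidirect sum, $A\lhd N$, $B$ is a subnearring of $N$, $N=A+B$ and $A\cap B=0$. Hence every $x\in N$ can be written uniquely as $x=a+b$ with $a\in A$, $b\in B$. The natural map $p:N\rightarrow N/A\cong B$ sends $a+b$ to $b$. This is a surjective nearring homomorphism with kernel $A$, and it restricts to the identity on $B$. So $(i,N,p)$ is an extension of $A$ by $B$, and $\sigma :B\rightarrow N$, $\sigma (b)=b$, satisfies $p\circ \sigma =1_{B}$ and $\sigma (0)=0$. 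It is therefore an admissible choice function in Step 2.

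Next I substitute $\sigma (b)=b$ into the formulas of Step 2 for the associated functions:
\[
\psi _{b}(a)=b+a-b,\qquad \alpha _{b}(a_{1},a_{2})=a_{1}(a_{2}+b),\qquad \beta _{b_{1},b_{2}}(a)=b_{1}(a+b_{2})-b_{1}b_{2}.
\]
These are exactly the functions in the statement. The only real point is the two factor systems. Because $\sigma $ is additive on the subgroup $B$, we get $[b_{1},b_{2}]=b_{1}+b_{2}-(b_{1}+b_{2})=0$. Because $B$ is closed under multiplication, $\sigma $ is also multiplicative, so $\left\langle b_{1},b_{2}\right\rangle =b_{1}b_{2}-b_{1}b_{2}=0$. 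This step is where $B$ being a subnearring of $N$, and not merely a transversal, is used.

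Finally I invoke Theorem 2.4 and the discussion preceding it with this $\sigma $. The map $f:N\rightarrow A\boxtimes B$, $f(a+b)=(a,b)$, is a nearring isomorphism onto the $E$-sum with these associated functions, and it satisfies $f\circ i=\xi $ and $\eta \circ f=p$. The initial conditions and $(G_{1}),(G_{2}),(A_{1}),(A_{2}),(D_{1}),(D_{2})$ then hold automatically by Theorem 2.3, since the transported structure is a nearring. By Proposition 3.2 the result is indeed the outer semidirect sum $A\rtimes B(\psi ,\alpha ,\beta )$.

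I do not expect a genuine obstacle. The one point needing care is to identify $N/A$ with $B$ through the specific map $a+b\mapsto b$, rather than through some abstract isomorphism. That identification is what makes the inclusion a section of $p$ and lets the factor systems vanish exactly.
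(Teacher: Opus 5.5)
Your proposal is correct and follows essentially the same route as the paper's proof. Both take $p(a+b)=b$ and the inclusion $\sigma(b)=b$ as the section in Step 2, observe that the resulting associated functions are exactly the listed ones (with both factor systems vanishing because $B$ is a subnearring), and conclude via Theorem 2.4. One small correction: Theorem 2.3 presupposes the initial conditions rather than yielding them, so they have to be checked directly as in Step 2, though this is immediate (e.g.\ $\beta_{0,b}(a)=0(a+b)-0b=0$ because $0n=0$ holds in any right distributive nearring).
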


\begin{proof}
We know $A$ is an ideal of $N,$ $B$ is a subnearring of $N,$ $N=A+B$ and $%
A\cap B=0.$ Let $p:N\rightarrow B$ be the mapping $p(a+b)=b.$ This is a
surjective nearring homomorphism with $\ker (p)=A$ and its restriction to $B$
is the identity map. Let $\sigma :B\rightarrow N$ be the choice function $%
\sigma (b)=b$ for all $b\in B.$ This is well-defined since $p\circ \sigma
=1_{B}$, $p(b)=b$ and $\sigma (0)=0.$ For this choice of $\sigma ,$ the
functions above are exactly the functions defined in Step 2 of the previous
section and we can rest our case.
\end{proof}

\bigskip

\subsection{Examples}

\noindent We will often use the following. Let $G$ be a group written
additively and let $A$ and $B$ be nearrings. The nearring $M(G)$ is the
nearring of all self maps on $G$ with respect to pointwise addition and
composition of functions. Then $G$ is a left $M(G)$-group with respect to
the canonical action $(f,t)\mapsto f(t)$ for $f\in M(G),t\in G.$ Recall that
for a nearring $N,$ the notion $N$-group is the nearring terminology for a
left $N$-module. As usual $A^{G}:=\{f\mid f:G\rightarrow A\}$ denotes the
product of $\left\vert G\right\vert $ copies of the nearring $A$ and it is a
nearring with respect to componentwise addition and multiplication, i.e., $%
(f+g)(t)=f(t)+g(t)$ and $(fg)(t)=f(t)g(t)$ for $f,g\in A^{G},t\in G.$ The
set $End(A):=End((A,+,\cdot ))$ denotes all the endomorphisms of the
nearring $A$ and it is a semigroup with respect to operation $f\ast
g:=g\circ f$.

\bigskip

\textbf{3.1.1} Bellas [3] considers an action of the multiplicative
semigroup $(B,\cdot )$ on $A$ given by the semigroup homomorphism $\gamma
:(B,\cdot )\rightarrow (End(A),\ast ),$ $\gamma (b)=\gamma _{b}:A\rightarrow
A.$ This means, for all $a,a_{1},a_{2},\in A$ and $b,b_{1},b_{2}\in B:$

\qquad $(B_{1})$ $\gamma _{b_{2}}(\gamma _{b_{1}}(a))=\gamma
_{b_{1}b_{2}}(a),$

\qquad $(B_{2})$ $\gamma _{b}(a_{1}+a_{2})=\gamma _{b}(a_{1})+\gamma
_{b}(a_{2})$ and

\qquad $(B_{3})$ $\gamma _{b}(a_{1}a_{2})=\gamma _{b}(a_{1})\gamma
_{b}(a_{2}).$

\noindent It follows that $\gamma _{b}(0)=0$ and $\gamma _{b}(-a)=-\gamma
_{b}(a).$

\noindent For such an action $\gamma ,$ the Cartesian product $A\times B$ is
a nearring with respect to the operations

\qquad $(a_{1},b_{1})+(a_{2},b_{2})=(a_{1}+a_{2},b_{1}+b_{2})$ and

\qquad $(a_{1},b_{1})(a_{2},b_{2})=(\gamma _{b_{2}}(a_{1})a_{2},b_{1}b_{2}).$

\noindent This nearring is denoted by $A\times _{\gamma }B$ and Bellas calls
it the semidirect product of $A$ and $B.$ Here $A^{\ast }:=\{(a,0)\mid a\in
A\}$ is an ideal and $B^{\ast }:=\{(0,b)\mid b\in B\}$ a right ideal of $%
A\times _{\gamma }B.$ Moreover, $\frac{A\times _{\gamma }B}{A^{\ast }}\cong
B\cong B^{\ast }$ (nearring isomorphisms)$.$ In general, this construction
will not be a semidirect sum of $A$ and $B$ as defined above since the
nearring $A$ need not be isomorphic to $A^{\ast }$ (cf. Veldsman [13],
Proposition 3). If, in addition to $(B_{1}),(B_{2})$ and $(B_{3}),$ a fourth
condition

\qquad $(B_{4})$ $\gamma _{0}(a_{1})a_{2}=a_{1}a_{2}$ for all $%
a_{1},a_{2},\in A;$

\noindent is required, then $A\times _{\gamma }B=A\rtimes B(\psi ,\alpha
,\beta )$ where the parameters $\psi ,\alpha $ and $\beta $ are given by $%
\psi _{b}:=1_{A}$, $\alpha _{b}(a_{1},a_{2}):=\gamma _{b}(a_{1})a_{2}$ and $%
\beta (b_{1},b_{2})=0$ for all $b,b_{1},b_{2}\in B$ and $a_{1},a_{2}\in A.$

\noindent A particular case of the Bellas construction used for defining the
wreath product of two nearrings is: Let $(G,+)$ be a left $B$-group, let $N$
be a nearring with $A:=N^{G}.$ Define $\gamma :(B,\cdot )\rightarrow
(End(A),\ast )$ by $\gamma (b)=\gamma _{b}:A\rightarrow A$ with $\gamma
_{b}(f)=f_{b}:G\rightarrow N,f_{b}(t)=f(bt)$ for all $b\in B,f\in A$ and $%
t\in G.$ It can be verified that $\gamma $ is a well-defined semigroup
homomorphism. This semidirect product $A\times _{\gamma }B$ is called the
wreath product of the nearrings $N$ and $B$ with respect to $\gamma $ and is
denoted by $N$ $\underset{\gamma }{Wr}$ $B.$ In general it is not a
semidirect sum. For this, $\gamma _{0}(f_{1})f_{2}=f_{1}f_{2}$ for all $%
f_{1},f_{2}\in A$ must be required. This can be achieved by imposing certain
conditions on the nearring $N$ and/or by choosing $A$ to be a suitable
subnearring of $N^{G}.$

\noindent In conclusion, we should mention that this notion of the wreath
product of nearrings has been investigated by several authors under the name
wreath sum of nearrings; see, for example, Chowdhury and Nath [5] and
Chowdhury and Das [4].

\bigskip

\textbf{3.1.2} Let $G$ be a faithful $A$-group. Then $A$ can be embedded as
a subnearring in $M(G)$ via $a\mapsto \overline{a}:G\rightarrow G,\overline{a%
}(t)=at$ for all $t\in G.$ We identify $a$ and $\overline{a},$ hence every $%
a\in A$ is regarded as a function from $G$ to $G.$ Suppose $G$ is a $B$%
-group. Then $M(G)$ is also a $B$-group in a canonical way $(b,f)\mapsto bf,$
$bf:G\rightarrow G$ is defined by $(bf)(t):=bf(t)$ for all $b\in B,f\in M(G)$
and $t\in G.$ Choose a function $f$ from $M(G)$ and keep it fixed. Now $Bf$
is a $B$-subgroup of $M(G).$ We suppose that the function $f$ is such that
the following four conditions are satisfied for all $a,a_{1},a_{2},\in A,$ $%
b,b_{1},b_{2}\in B$ and $t\in G:$

\qquad $(i)$ $bf+a-bf\in A;$

\qquad $(ii)$ $a_{1}(a_{2}+bf)\in A;$

\qquad $(iii)$ $(b_{1}f)(a+b_{2}f)-b_{1}b_{2}f\in A;$ and

\qquad $(iv)$ $f(bt)=bf(t)$ and $f(f(t))=f(t).$

\noindent This determines a semidirect sum $A\rtimes B(\theta ,\alpha ,\beta
)$ where $\psi _{b}(a):=bf+a-bf,$ $\alpha _{b}(a_{1},a_{2})=a_{1}(a_{2}+bf)$
and $\beta _{b_{1},b_{2}}(a)=(b_{1}f)(a+b_{2}f)-b_{1}b_{2}f.$ The operations
are given by

\qquad $(a_{1},b_{1})+(a_{2},b_{2})=(a_{1}+b_{1}f+a_{2}-b_{1}f,b_{1}+b_{2})$
and

$\qquad
(a_{1},b_{1})(a_{2},b_{2})=((a_{1}+b_{1}f)(a_{2}+b_{2}f)-b_{1}b_{2}f,b_{1}b_{2}). 
$

\noindent We look at two special cases of this construction.

\noindent \textbf{(a)} The first is due to Betsch [1], but here we look at a
slightly altered version given in Veldsman [14]. Let $B=(%
\mathbb{Z}
,+,\cdot )$ be the ring of integers. Let $G$ be a group and let $f=1_{G}$,
the identity function on $G.$ Suppose that $A$ is a nearring such that $G$
is a faithful $A$-group and conditions $(i)$ and $(ii)$ are fulfilled.
Clearly with this choice of $f,$ $(iv)$ is satisfied. Next we show that $(i)$
is sufficient for $(iii):$ For $a\in A$ and $n,m\in 
\mathbb{Z}
,$ say $n>0,$

\qquad $(n1_{G})(a+m1_{G})-nm1_{G}$

\qquad $=a+\tsum\limits_{i=1}^{n-1}(m1_{G}+a)+[m1_{G}-nm1_{G}]$

\qquad $=a+\tsum%
\limits_{i=1}^{n-1}(im1_{G}+a-im1_{G})+[(n-1)m1_{G}+m1_{G}-nm1_{G}]$

\qquad $=a+\tsum\limits_{i=1}^{n-1}(im1_{G}+a-im1_{G})\in A.$

\noindent A similar argument takes care of the case when $n<0.$ In
conclusion, here we have a nearring $A$ with sufficient (and actually also
necessary) conditions to ensure that it can be embedded as an ideal in a
nearring $A\rtimes 
\mathbb{Z}
$ with identity. The operations are given by:

\qquad $(a_{1},n_{1})+(a_{2},n_{2})=(a_{1}+n_{1}f+a_{2}-n_{1}f,n_{1}+n_{2}),$

$\qquad
(a_{1},n_{1})(a_{2},n_{2})=((a_{1}+n_{1}f)(a_{2}+n_{2}f)-n_{1}n_{2}f,n_{1}n_{2}) 
$

\noindent and the identity element is $(0,1).$ When $A$ is a ring, then $%
A\rtimes 
\mathbb{Z}
$ is just the Dorroh extension of $A$ (i.e., the canonical unital extension
of the ring $A).$

\bigskip

\noindent \textbf{(b)} Suppose $G$ is a $0$-symmetric nearring which
contains a normal subgroup $I$ and an idempotent element $e\in G$ such that $%
Ie\subseteq I.$ Define a function $f:G\rightarrow G$ by $f(t):=te$ for all $%
t\in G.$ Now $A:=\{f\in M(G)\mid f(G)\subseteq I\}$ is a subnearring of $%
M(G) $ and $G$ is a faithful $A$-group. With $B=(%
\mathbb{Z}
,+,\cdot )$, the four conditions above are fulfilled. Indeed, $(i)$ and $%
(ii) $ are valid by the definition of $A$ and the normality of $I,$ $(iii)$
can be done as in $(a)$ above using the assumption $Ie\subseteq I$ and $(iv)$
follows directly from the definition of $f.$

\bigskip

\noindent The general radical theory of nearrings contains many
constructions of purpose build nearrings. Some of these are semidirect sums
and a select few are given as the next examples.

\bigskip

\textbf{3.1.3 }(cf. Veldsman [13], Theorem 2.5) Let $N$ be a nearring and
let $I:=(0:N)_{N}.$ Let $A$ be the subnearring of $N^{N}$ defined by $%
A:=\{f\in N^{N}\mid $ for all $x,y\in N,f(x)-f(y)\in I\}$ and let $B$ be the
nearring with zero multiplication defined on the group $(N,+).$ Define $\psi
:(B,+)\rightarrow (Aut(A,+),\ast )$ by $\psi (b)=\psi _{b}:A\rightarrow A$
and $\psi _{b}(f):=f_{b}:N\rightarrow N$ is given by $f_{b}(x):=f(x-b)$ for
all $b\in B,f\in A$ and $x\in N.$ For $\alpha :B\rightarrow Map(A^{2},A)$ we
let $\alpha (b)=\alpha _{b}:A^{2}\rightarrow A$ be given by $\alpha
_{b}(f,g):=fg$ for all $b\in B$ and $f,g\in A.$ Lastly, $\beta
:B^{2}\rightarrow Map(A,A)$ is defined by $\beta (b_{1},b_{2})=0$ for all $%
b_{1},b_{2}\in B.$ Here $(f,b_{1})+(g,b_{2})=(f+\psi
_{b_{1}}(g),b_{1}+b_{2}) $ and $(f,b_{1})(g,b_{2})=(fg,0).$

\bigskip

\textbf{3.1.4} (cf. Betsch and Kaarli [2]) Let $N$ and $M$ be nearrings,
both with multiplication zero (i.e., for all $a,b\in N$ (or from $M),$ $%
ab=0).$ Let $A=N\oplus M$ be the direct sum of $N$ and $M$ and let $B=N.$ We
write $A\times B$ as $A\times B=(N,M,N).$ Let $\psi _{b}=1_{A}$ and $\alpha
_{b}=0$ for all $b\in B.$ Define $\beta :B^{2}\rightarrow Map(A,A)$ by $%
\beta _{b_{1},b_{2}}(a,x)=\left\{ 
\begin{array}{l}
(b_{1},0)\text{ if }x\neq 0 \\ 
(0,0)\text{ if }x=0%
\end{array}%
\right. .$ This gives a semidirect sum on $A$ and $B$ with componentwise
addition and the multiplication is given by:

\qquad $(a,x,b)(c,y,d)=\left\{ 
\begin{array}{l}
(b,0,0)\text{ if }y\neq 0 \\ 
(0,0,0)\text{ if }y=0%
\end{array}%
\right. .$ Here $(N,M,0)\lhd (N,M,N)$ with $\frac{(N,M,N)}{(N,M,0)}\cong
N\cong (0,0,N).$

\noindent There are many more constructions on this theme that give
semidirect sums, but they will not be mentioned here.

\bigskip

\textbf{3.1.5} (cf. Veldsman [15]) Let $(N,+)$ be a group and let $N^{0}$ be
the nearring on $(N,+)$ with zero multiplication $(ab=0$ for all $a,b)$ and $%
N^{c}$ the nearring with constant multiplication $(ab=a$ for all $a,b).$ Let 
$A$ be the nearring defined by $A=\{(a,b)\mid a,b\in N\}$ with

\qquad $(a,b)+(c,d)=(a+c,-c+b+c+d)$ and

$\qquad (a,b)(c,d)=(a,0).$

\noindent Let $B=N^{c}.$ For $x,y\in B,$ let $\psi _{x}:A\rightarrow A$ be $%
\psi _{x}=1_{A}$ and let $\beta (x,y)=0.$ For $x\in B$ and $(a,b),(c,d)\in
A, $ let

\qquad \qquad \qquad $\alpha _{x}((a,b),(c,d))=\left\{ 
\begin{array}{l}
(a+b,0)\text{ if }x\neq 0 \\ 
(a,0)\text{ if }x=0%
\end{array}%
\right. .$

\noindent All this defines a semidirect sum $A\rtimes B(\psi ,\alpha ,\beta
) $ and, if we write $A\rtimes B$ as $(N,N,N),$ the operations are given by:

\qquad \qquad $(a,b,x)+(c,d,y)=(a+c,-c+b+c+d,x+y)$ and

\qquad \qquad $(a,b,x)(c,d,y)=\left\{ 
\begin{array}{l}
(a+b,0,x)\text{ if }y\neq 0 \\ 
(a,0,x)\text{ if }y=0%
\end{array}%
.\right. $

\bigskip

\noindent \textbf{References}

\noindent \lbrack 1] G. Betsch (1987). Embedding of a Near-Ring into a
Near-Ring with Identity, \textit{North-Holland Mathematics Studies} Volume 
\textbf{137}, 37-40.

\noindent \lbrack 2] G. Betsch and K. Kaarli (1985). Supernilpotent radicals
and hereditariness of semisimple classes, \textit{Coll. Soc. J. Bolyai, 38,
Theory of radicls, Eger, 1982}, North Holland, 47-58.

\noindent \lbrack 3] Carlos Ruiz de Velasco y Bellas (1983). Wreath products
of near-rings, \textit{Houston Journal of Mathematics}, \textbf{9} no 3, 357
- 362.

\noindent \lbrack 4] Khanindra Chandra Chowdhury and Prohelika Das (2012).
Wreath sum of near-rings and near-ring groups, S\textit{outheast Asian
Bulletin of Mathematics} \textbf{36}, 169-185.

\noindent \lbrack 5] K.C. Chowdhury and Dipti Nath (2013). Some aspects of $%
\theta $-wreath sum of near-rings, \textit{Far East Journal of Mathematical
Sciences} \textbf{75} (1), 27-46.

\noindent \lbrack 6] J. R. Clay (1992). \textit{Nearrings: Geneses and
Applications}, Oxford University Press.

\noindent \lbrack 7] Alberto Facchini and David Stanovsk\'{y}. \textit{%
Semidirect products in Universal Algebra}, arXiv:2311.04321v1,
https://doi.org/10.48550/arXiv.2311.04321.

\noindent \lbrack 8] C.C. Ferrero and G. Ferrero (2002). \textit{Nearrings:
some developments linked to semi-groups and groups, }Kluwer Academic
Publishers.

\noindent \lbrack 9] J.D.P. Meldrun (1985). \textit{Near-Rings and Their
Links with Groups}, Pitman Advanced Publishing Program, Boston.

\noindent \lbrack 10] M. Petrich (1985). Ideal extensions of rings, \textit{%
Acta Mathematica Hungarica} \textbf{45}, 263-283.

\noindent \lbrack 11] G. Pilz (1983). \textit{Near-Rings: The Theory and Its
Applications}, North Holland Mathematics Studies (revised edition).
Amsterdam.

\noindent \lbrack 12] L. R\'{e}dei (1967). \textit{Algebra} Vol I, Pergamon
Press, Oxford-New York.

\noindent \lbrack 13] S. Veldsman (1991). An overnilpotent radical theory
for near-rings, \textit{Journal of Algebra} \textbf{144}, 248-265.

\noindent \lbrack 14] S. Veldsman (1992). On unital extensions of near-rings
and their radicals, \textit{Math. Pannonica} \textbf{3}, 77-81.

\noindent \lbrack 15] S. Veldsman (1996). The general theory of near-rings -
answers to some open problems, \textit{Algebra Universalis} \textbf{36}, 185
- 189.

\end{document}